\title[Characteristic classes of Higgs bundles]{Characteristic classes of Higgs bundles and Reznikov's theorem}
\author{Eric O. Korman}
\address{Department of Mathematics \\
The University of Texas at Austin \\
2515 Speedway, RLM 8.100 \\
Austin, TX 78712}
\email{ekorman@math.utexas.edu}
\urladdr{http://www.ma.utexas.edu/users/ekorman/}
\date{}
\renewcommand{\O}{\mathcal O}
\newcommand{\C}{\mathbb{C}}
\newcommand{\R}{\mathbb{R}}
\newcommand{\Z}{\mathbb{Z}}
\newcommand{\F}{\mathbb{F}}
\newcommand{\End}{\operatorname{End}}
\newcommand{\tr}{\operatorname{tr}}
\newcommand{\w}{\wedge}
\newcommand{\om}{\omega}
\newcommand{\Om}{\Omega}
\newcommand{\A}{\mathcal A}
\newcommand{\rk}{\operatorname{rk}}
\newcommand{\Hom}{\text{Hom}}
\newcommand{\ch}{\operatorname{ch}}
\renewcommand{\F}{\mathcal F}
\newtheorem{thm}{Theorem}
\newtheorem{defin}{Definition}
\newtheorem{prop}{Proposition}
\newcommand{\At}{\operatorname{At}}
\newcommand{\at}{\operatorname{at}}
\newcommand{\ah}{\operatorname{ah}}
\begin{document}
\maketitle

\begin{abstract}
We introduce Dolbeault cohomology valued characteristic classes of Higgs bundles over complex manifolds.  Flat vector bundles have characteristic classes lying in odd degree de Rham cohomology and a theorem of Reznikov says that these must vanish in degrees three and higher over compact K\"ahler manifolds.  We provide a simple and independent proof of Reznikov's result and show that our characteristic classes of Higgs bundles and the characteristic classes of flat vector bundles are compatible via the nonabelian Hodge theorem.
\end{abstract}

\section{Introduction}
Flat complex vector bundles over a manifold $X$ have Cheeger-Simons \cite{Cheeger} classes $CS_{2j+1} \in H^{2j+1}(X;\C/\Z)$, which, for algebraic manifolds, correspond to the Deligne Beilinson Chern classes \cite{Dupont,Rez, Soule}.  The imaginary part, $v_{2j+1}$, of $CS_{2j+1}$ lies in $H^{2j+1}(X;\R)$ and has a nice geometric description in terms of adjoint connections that fits into the theory of characteristic classes of foliated bundles \cite{BL, KT}. 

For $X$ a compact K\"ahler manifold, the nonabelian Hodge theorem of Simpson \cite{Simpson} gives a correspondence between semisimple flat vector bundles over $X$ and polystable Higgs bundles over $X$ with vanishing (rational) Chern classes.  A natural question is if there is a secondary characteristic class theory for Higgs bundles that, for polystable Higgs bundles with vanishing Chern classes, corresponds to the classes $v_{2j+1}$ via the nonabelian Hodge theorem and Hodge decomposition.  It turns out that the classes $v_{2j+1}$ necessarily vanish in degrees three and higher over compact K\"ahler manifolds.  This was a conjecture of Bloch \cite{Bloch} and was proved by Reznikov \cite{Rez}.

In this paper we first give a simple and independent proof of Reznikov's theorem using the nonabelian Hodge theorem of Simpson \cite{Simpson} and a Chern-Weil description of the flat characteristic classes $v_{2j+1}$, which appears in the work of Bismut and Lott \cite{BL}.  Differential form representatives of the characteristic classes are given in terms of an arbitrary hermitian metric on the vector bundle.  Because our proof shows the vanishing of the differential forms associated to the harmonic metric, we are able to define primitive forms for the representatives associated to an arbitrary hermitian metric.

We then define secondary characteristic classes for Higgs bundles over an arbitrary complex manifold $X$, which take values in the Dolbeault cohomology groups $H^{k+1,k}(X)$, $k \ge 0$.  In the case of a polystable Higgs bundle with zero Chern classes over a compact K\"ahler manifold, these classes vanish for $k \ge 1$ and for $k=0$ have real part equal to $-\frac{1}{2}v_1$ under the Hodge decomposition.  The classes are defined simply as
\[
\frac{1}{k!} \left(\frac{i}{2\pi}\right)^k \tr(\At(E)^k \theta) \in H^{k+1,k}(X;\C),
\]
where 
\[
\At E \in H^{1,1}(X;\End E) \simeq H^1(\Om_X^1\otimes \mathcal O(\End E))
\] 
is the Atiyah class of $E$ and 
\[
\theta \in H^{1,0}(X;\End E)\simeq H^0(X;\Om_X^1\otimes \mathcal O(\End E))
\]
is the Higgs field.  We will see that the classes can also be defined as transgression forms for scalar Atiyah classes.

We remark that characteristic classes for families of polystable flat Higgs bundles appear in \cite{Bis}.

\section*{Acknowledgments}
The author thanks Jonathan Block and Tony Pantev for helpful discussions regarding this work.

\section{Reznikov's Theorem}
In this section we briefly review a Chern-Weil construction of the characteristic classes of flat vector bundles in odd degree de Rham cohomology and the nonabelian Hodge theorem.  We then show how these two combine to give a quick proof of Reznikov's result \cite{Rez}. 

For a smooth manifold $X$ and vector bundle $E\to X$, $\A^\bullet(X; E)$ denotes the space of $E$-valued differential forms.  If $X$ is a complex manifold then $\A^{p,q}(X;E)$ denotes the $E$-valued forms in bidegree $(p,q)$.

\subsection{Characteristic classes of flat vector bundles}
Let $X$ be an arbitrary smooth manifold and $(E,\nabla) \to X$ a flat complex vector bundle.  In \cite{BL}, the classes $v_{2j+1}$ are defined as follows.  First choose an arbitrary hermitian metric $h$ on $E$ and let $\nabla^*$ denote the adjoint connection, i.e. $\nabla^*$ is the connection on $E$ defined by
\[
dh(\psi_1,\psi_2) = h(\nabla \psi_1, \psi_2) + h(\psi_1,\nabla^* \psi_2), ~~ \psi_1,\psi_2 \in \Gamma(X;E).
\]
Note that $\nabla^* = \nabla$ for some $h$ if and only if the holonomy representation of $\pi_1(X)$ defining the flat bundle is unitary.  Now define
\[
\om(E,\nabla,h) = \frac{1}{2}(\nabla^* - \nabla) \in \A^1(X;\End E)
\]
and, for $j \ge 0$,
\begin{equation}
v_{2j+1}(E,\nabla,h) = (2\pi i)^{-j} \tr \om(E,\nabla,h)^{2j+1} \in \A^{2j+1}(X;\R). \label{cDEF}
\end{equation}
These forms are closed and their cohomology classes are independent of the choice of $h$ \cite{BL}.  Indeed, if $h_t, t \in [0,1]$ is a smooth family of metrics then an explicit $2j$-form $Tv_{2j}(E,\nabla,h_0,h_1)$ satisfying
\[
d Tv_{2j}(E,\nabla,h_0,h_1) = v_{2j+1}(E,\nabla,h_1) - v_{2j+1}(E,\nabla,h_0)
\]
is given by \cite{BL}
\begin{equation}
Tv_{2j}(E,\nabla,h_t) = \frac{1}{2}(2j+1)(2i\pi)^{-j} \tr \int_0^1 h_t^{-1} \frac{\partial h_t}{\partial t} \om(E,\nabla,h_t)^{2j}  dt  \label{transgression},
\end{equation}
where above we are viewing $h_t$ as a map $E\to \overline E^*$.

\begin{defin}
Let $v_{2j+1}(E,\nabla) \in H^{2j+1}(X;\R)$ be the cohomology class of $v_{2j+1}(E,\nabla,h)$, where $h$ is an arbitrary hermitian metric on $E$.
\end{defin}
We will often write $v_{2j+1}(E)$ for $v_{2j+1}(E,\nabla)$.  These characteristic classes have the following properties, whose proofs can be found in \cite{BL}.  
\begin{prop}
If $E_1, E_2 \to X$ are both flat then
\begin{enumerate}
\item $v_k(E_1\oplus E_2) = v_k(E_1) + v_k(E_2)$.
\item $v_k(E_1\otimes E_2) = (\rk E_1)v_k(E_2) + (\rk E_2) v_k(E_1)$.
\end{enumerate}
\end{prop}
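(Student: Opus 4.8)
The plan is to prove both identities directly at the level of the differential forms defining the classes, taking advantage of the metric-independence recorded above: since $v_k(E)\in H^k(X;\R)$ is insensitive to the choice of hermitian metric, I am free to compute $v_k(E_1\oplus E_2)$ and $v_k(E_1\otimes E_2)$ using the \emph{natural} metrics $h_1\oplus h_2$ and $h_1\otimes h_2$ assembled from arbitrary metrics $h_i$ on $E_i$, paired with the flat connections $\nabla_1\oplus\nabla_2$ and $\nabla_1\otimes 1+1\otimes\nabla_2$. A short computation with the defining relation for the adjoint then shows that the adjoint of $\nabla_1\oplus\nabla_2$ is $\nabla_1^*\oplus\nabla_2^*$ and the adjoint of $\nabla_1\otimes1+1\otimes\nabla_2$ is $\nabla_1^*\otimes1+1\otimes\nabla_2^*$; consequently $\omega(E_1\oplus E_2)=\omega_1\oplus\omega_2$ and $\omega(E_1\otimes E_2)=\omega_1\otimes1+1\otimes\omega_2$, where I abbreviate $\omega_i=\omega(E_i,\nabla_i,h_i)$.

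Before either case I would isolate the single algebraic fact that drives everything: if $\omega\in\A^1(X;\End E)$ then $\tr\omega^{2m}=0$ for every $m\ge1$. This is immediate from graded cyclicity of the trace, since moving the leading factor of the $1$-form $\omega$ past the remaining $\omega^{2m-1}$ contributes the sign $(-1)^{2m-1}=-1$, forcing $\tr\omega^{2m}=-\tr\omega^{2m}$. Because each $v_{2j+1}$ is built from exactly such an $\omega$, only odd powers can survive. The direct sum is then settled at once: $\omega(E_1\oplus E_2)=\omega_1\oplus\omega_2$ is block diagonal, so $\omega^{2j+1}$ has diagonal blocks $\omega_i^{2j+1}$ and $\tr\omega^{2j+1}=\tr\omega_1^{2j+1}+\tr\omega_2^{2j+1}$, giving property (1) already at the level of forms.

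For the tensor product, writing $a=\omega_1\otimes1$ and $b=1\otimes\omega_2$ in $\A^1(X;\End(E_1\otimes E_2))$, the structural point is that $a$ and $b$ \emph{anticommute}: their endomorphism parts act on different tensor factors and so commute, while their $1$-form parts anticommute, whence $ab=-ba$. Therefore $(a+b)^2=a^2+b^2$ with $a^2,b^2$ commuting, and $(a+b)^{2j+1}=(a+b)\sum_{k=0}^j\binom{j}{k}a^{2k}b^{2(j-k)}=\sum_{k=0}^j\binom{j}{k}\bigl(a^{2k+1}b^{2(j-k)}+a^{2k}b^{2(j-k)+1}\bigr)$. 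Using $a^p=\omega_1^p\otimes1$, $b^q=1\otimes\omega_2^q$ and the factorization $\tr(M\otimes N)=\tr(M)\,\tr(N)$ over the two slots, each term collapses to $\tr\omega_1^p\wedge\tr\omega_2^q$. The vanishing lemma now eliminates every mixed contribution: the factor $\tr\omega_2^{2(j-k)}$ vanishes unless $k=j$, and the factor $\tr\omega_1^{2k}$ vanishes unless $k=0$. What remains is precisely $(\rk E_2)\tr\omega_1^{2j+1}+(\rk E_1)\tr\omega_2^{2j+1}$, and multiplying by $(2\pi i)^{-j}$ delivers property (2).

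I expect the only genuine subtlety to be the sign bookkeeping in the tensor-product step: verifying the Koszul anticommutation $ab=-ba$ with the correct sign and confirming that the trace really factors across the two tensor factors (so that no hidden reordering sign intervenes in $\tr(a^pb^q)=\tr\omega_1^p\wedge\tr\omega_2^q$). Everything else — the behavior of the adjoint connection under $\oplus$ and $\otimes$, the vanishing of even traces, and metric-independence — is either immediate or already available, so once the conventions are pinned down no serious obstacle remains, and both identities in fact hold at the level of forms for the natural metrics and hence in cohomology.
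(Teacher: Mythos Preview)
Your argument is correct. The paper itself does not supply a proof of this proposition; it simply records the two identities and refers the reader to Bismut--Lott \cite{BL}. Your computation is precisely the natural Chern--Weil verification one expects: use the induced metrics $h_1\oplus h_2$ and $h_1\otimes h_2$ so that $\omega$ is block-diagonal, respectively of the form $\omega_1\otimes 1+1\otimes\omega_2$, and then exploit the graded cyclicity identity $\tr\omega^{2m}=0$ to kill all cross terms. The sign bookkeeping you flag (Koszul anticommutation of $a$ and $b$, and the factorization $\tr(a^p b^q)=\tr\omega_1^p\wedge\tr\omega_2^q$) is straightforward once checked locally, and your handling of it is fine. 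In short, there is nothing to compare against in the paper, and your proof stands on its own.
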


The classes have another geometric description, used by Reznikov \cite{Rez} (who calls these volume regulators), and which fits into the framework of Kamber and Tondeur's characteristic classes for foliated bundles \cite{KT}.  Let $\tilde X$ denote the universal cover of $X$ and let $\rho: \pi_1(X) \to GL(n,\C)$ denote the holonomy representation of the flat bundle $E$.  Consider the fiber bundle
\[
\F = \tilde X \times_\rho GL(n,\C)/U(n),
\]
whose smooth sections correspond to the space of hermitian metrics on $E$.  For each $k$, we define
\begin{gather*}
\psi_k \in \Lambda^k \mathfrak u(n)^\perp \simeq \Lambda^k (i\mathfrak u(n))^*, \\
\psi_k(A_1,\ldots,A_k) = \operatorname{Alt}\tr(A_1\cdots A_k).
\end{gather*}
This determines a closed $GL(n,\C)$ invariant $k$-form on $GL(n,\C)/U(n)$, which further gives a closed $k$-form $\tilde \psi_k$ on $\F$.  Then given any metric $h\in \Gamma(X;\F)$, $h^*\tilde \psi_k$ is a closed $k$-form on $X$ whose cohomology class is independent of $h$ since $GL(n,\C)/U(n)$ is contractible.  Straightforward computations show that, up to constant factors, this is the class $v_k(E)$ \cite{BL}.

\subsection{The nonabelian Hodge theorem}
A Higgs bundle $(E,\theta)$ over a complex manifold $X$ consists of a holomorphic vector bundle $(E, \bar\partial_E)\to X$ together with $\theta \in \A^{1,0}(X;\End E)$ such that
\[
[\bar\partial_E, \theta] = 0, ~~ \theta \w \theta = 0,
\]
where $[\cdot,\cdot]$ will always denote the supercommutator.  For $X$ a compact K\"ahler manifold, the nonabelian Hodge theorem \cite{Simpson} says that there is a 1-to-1 correspondence
\[
\left(\text{\parbox{4cm}{\centering polystable Higgs bundles on $X$ with vanishing rational Chern classes}}\right)\longleftrightarrow \left(\text{\parbox{4cm}{\centering semi-simple flat vector bundles on $X$}}\right).
\]
To go from a Higgs bundle to a flat bundle requires the existence of a Hermitian-Yang-Mills metric $h$.  The flat connection is then given by
\[
\nabla = \nabla_h + \theta + \theta^*,
\]
where $\nabla_h$ denotes the Chern connection (i.e. the unique connection preserving $h$ and having $(0,1)$ part equal to $\bar\partial_E$) and $\theta^*$ is defined by
\[
h(\theta \psi_1, \psi_2) = h(\psi_1,\theta^* \psi_2), ~~ \psi_1,\psi_2\in \Gamma(X;E).
\]

\subsection{Proof of Reznikov's theorem}
We can now give a quick proof of Reznikov's theorem \cite{Rez}:
\begin{thm}
For $X$ a smooth complex projective variety, the Chern classes in Deligne cohomology are torsion in degrees 2 and higher.
\end{thm}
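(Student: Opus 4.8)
The plan is to translate the torsion statement into the vanishing of the real characteristic classes $v_{2j+1}$ and then to evaluate those classes on the harmonic metric supplied by the nonabelian Hodge theorem. For a flat bundle $(E,\nabla)$ over the smooth projective variety $X$ the Chern--Weil forms of $\nabla$ vanish, so the integral Chern classes in positive degree are already torsion; hence for each $m\ge 2$ the Deligne Chern class $c_m(E)$ lies, modulo torsion, in the intermediate Jacobian $H^{2m-1}(X;\C)/(F^m+H^{2m-1}(X;\Z))$. A point of this complex torus is torsion exactly when its image under the real regulator in $H^{2m-1}(X;\R)$ is zero, and that image is, up to a nonzero constant, the imaginary part $v_{2m-1}(E)$ of the corresponding Cheeger--Simons class. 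Thus the theorem reduces to the statement
\[
v_{2j+1}(E)=0\in H^{2j+1}(X;\R),\qquad j\ge 1 .
\]
Since these secondary classes are insensitive to the associated graded of a flat filtration, I may assume $(E,\nabla)$ is semisimple, which is precisely the hypothesis under which Simpson's correspondence applies.

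I would then compute $v_{2j+1}$ using the harmonic metric in formula \eqref{cDEF}. By the nonabelian Hodge theorem the semisimple bundle $(E,\nabla)$ arises from a polystable Higgs bundle $(E,\theta)$ with Hermitian--Yang--Mills metric $h$, so $\nabla=\nabla_h+\theta+\theta^*$. Because $\nabla_h$ preserves $h$, and because the definition of $\theta^*$ gives $h((\theta+\theta^*)\psi_1,\psi_2)=h(\psi_1,(\theta+\theta^*)\psi_2)$, the defining equation of the adjoint connection yields at once $\nabla^*=\nabla_h-\theta-\theta^*$. Evaluated on the harmonic metric this gives
\[
\om(E,\nabla,h)=\tfrac12(\nabla^*-\nabla)=-(\theta+\theta^*)\in\A^1(X;\End E).
\]

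It remains to show that the form $\tr\om^{2j+1}$ itself vanishes for $j\ge 1$; since the cohomology class of $v_{2j+1}$ is metric-independent, vanishing on the harmonic metric suffices. Here I would invoke the Higgs relation $\theta\w\theta=0$ together with its adjoint $\theta^*\w\theta^*=0$, which holds because the components of $\theta$ commute and hence so do those of $\theta^*$. Expanding $(\theta+\theta^*)^{2j+1}$, any monomial with two adjacent like factors dies, so only the two alternating words $\theta\theta^*\cdots\theta$ and $\theta^*\theta\cdots\theta^*$ survive. Writing the first as $Q\w\theta$ with $Q$ of form-degree $2j$ and using the graded cyclicity $\tr(Q\w\theta)=(-1)^{2j}\tr(\theta\w Q)=\tr(\theta\w Q)$ brings two copies of $\theta$ together, so its trace is a multiple of $\tr(\theta\w\theta\w\cdots)=0$; the second word vanishes by the same argument applied to $\theta^*$. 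Hence $v_{2j+1}(E,\nabla,h)=0$ for all $j\ge 1$, which is the desired reduction.

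The short differential-form computation is not where I expect the difficulty to lie. The main obstacle is rather the bookkeeping of the first paragraph: making precise that torsion in Deligne cohomology is detected exactly by the real regulator and that this regulator agrees, up to a constant, with $v_{2m-1}$, together with the reduction to semisimple flat bundles that is needed to bring Simpson's theorem to bear.
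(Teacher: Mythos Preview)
Your proposal is correct and follows essentially the same route as the paper: the paper likewise reduces the Deligne-cohomology torsion statement to the vanishing of $v_{2j+1}(E)$ for $j\ge 1$ (citing Reznikov's Borel-regulator argument for that reduction, just as you flag it as bookkeeping to be filled in), and then proves the vanishing by computing $\om(E,\nabla,h)=-(\theta+\theta^*)$ for the harmonic metric and killing $\tr(\theta+\theta^*)^{2j+1}$ via $\theta^2=(\theta^*)^2=0$ together with the graded cyclicity of the trace. Your expansion and trace manipulation are the same as the paper's, only phrased slightly differently.
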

Using properties of Borel regulators, Reznikov shows that this theorem is implied by the following proposition, of which we give an independent proof.
\begin{prop}\label{propCVanish}
If $(E,\nabla) \to X$ is a semisimple flat vector bundle over a compact K\"ahler manifold, then $v_{2j+1}(E) = 0$ for $j \ge 1$.
\end{prop}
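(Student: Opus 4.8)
The plan is to compute the representatives $v_{2j+1}(E,\nabla,h)$ using the one metric for which the formula simplifies drastically, namely the harmonic metric provided by the nonabelian Hodge theorem. Since the cohomology class $v_{2j+1}(E)$ is independent of $h$, it is enough to exhibit a single metric for which the differential form $v_{2j+1}(E,\nabla,h)$ vanishes identically. Because $(E,\nabla)$ is semisimple over a compact K\"ahler manifold, Simpson's theorem supplies a harmonic (Hermitian--Yang--Mills) metric $h$ with $\nabla = \nabla_h + \theta + \theta^*$, where $\nabla_h$ is the unitary Chern connection of a Higgs bundle $(E,\theta)$ and $\theta^*$ is the $h$-adjoint of the Higgs field; this is the only place semisimplicity and the K\"ahler hypothesis enter.

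First I would identify $\om(E,\nabla,h)$ in terms of this data. As $\nabla_h$ preserves $h$ it equals its own adjoint connection, and the defining relation $h(\theta\psi_1,\psi_2) = h(\psi_1,\theta^*\psi_2)$ shows that $\theta+\theta^*$ is $h$-self-adjoint. Substituting $\nabla = \nabla_h + (\theta+\theta^*)$ into $dh(\psi_1,\psi_2) = h(\nabla\psi_1,\psi_2) + h(\psi_1,\nabla^*\psi_2)$ and comparing with the corresponding identity for $\nabla_h$ yields $\nabla^* = \nabla_h - (\theta+\theta^*)$. Hence
\[
\om(E,\nabla,h) = \tfrac12(\nabla^* - \nabla) = -(\theta+\theta^*),
\]
so that $v_{2j+1}(E,\nabla,h) = -(2\pi i)^{-j}\,\tr\big((\theta+\theta^*)^{2j+1}\big)$.

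The heart of the matter is then the purely algebraic vanishing of $\tr\big((\theta+\theta^*)^{2j+1}\big)$ for $j\ge 1$. Expanding the power gives a sum of traces of words of odd length $2j+1$ in the two $\End E$-valued forms $\theta$ and $\theta^*$. The Higgs condition $\theta\w\theta = 0$ gives $\theta^2 = 0$, and taking $h$-adjoints gives $(\theta^*)^2 = 0$, so any word with two equal adjacent letters has vanishing product. Viewing each word cyclically under the trace, an odd-length word in two symbols cannot be properly two-colored, so it always contains a pair of cyclically adjacent equal letters; using cyclicity of the trace to bring such a pair together and then $\theta^2 = (\theta^*)^2 = 0$, every monomial trace vanishes. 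Thus $v_{2j+1}(E,\nabla,h)\equiv 0$ as a form for $j\ge 1$, and metric-independence gives $v_{2j+1}(E)=0$. I expect the only delicate points to be bookkeeping rather than substance: fixing the sign in $\om = -(\theta+\theta^*)$ from the adjoint-connection convention, and tracking the Koszul signs when invoking cyclicity of the supertrace of odd-degree forms — neither of which affects the conclusion, since each term is individually zero. Note that the argument fails precisely at $j=0$, where the single-letter words $\theta$ and $\theta^*$ survive and $v_1$ need not vanish.
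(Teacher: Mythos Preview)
Your proposal is correct and is essentially identical to the paper's own proof: both use the harmonic metric from the nonabelian Hodge theorem to obtain $\om(E,\nabla,h) = -(\theta+\theta^*)$, then exploit $\theta^2=(\theta^*)^2=0$ together with cyclicity of the trace to kill $\tr((\theta+\theta^*)^{2j+1})$ for $j\ge 1$. The only cosmetic difference is that the paper writes the two surviving alternating words $(\theta\theta^*)^j\theta$ and $(\theta^*\theta)^j\theta^*$ explicitly rather than phrasing the argument in terms of cyclic two-coloring.
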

\begin{proof}
By the nonabelian Hodge theorem, we have
\begin{equation*}
\nabla = \nabla_h + \theta + \theta^*,
\end{equation*}
where $h$ is the harmonic metric.  Now, $\nabla_h$ preserves $h$ and $\theta + \theta^*$ is hermitian (whereas to preserve the metric it would have to be skew-hermitian) so that the adjoint of $\nabla$ with respect to $h$ is
\begin{equation*}
\nabla^* =\nabla_h - \theta - \theta^*.
\end{equation*}
Thus
\[
\om(E,h) = -(\theta + \theta^*).
\]
But since $\theta^2 = 0 = (\theta^*)^2$ we have, for $j > 1$,
\begin{align*}
\tr(\om(E,h)^{2j+1}) &= -\tr\left((\theta + \theta^*)^{2j+1}\right) \\
&= -\tr((\theta\theta^* +\theta^*\theta)^j (\theta + \theta^*)) \\
&= -\tr((\theta\theta^* \cdots \theta\theta^*) \theta) - \tr((\theta^*\theta \cdots \theta^*\theta) \theta^*) \\
&= -\tr(\theta(\theta\theta^* \cdots \theta\theta^*)) - \tr(\theta^*(\theta^*\theta \cdots \theta^*\theta)) \\
&= 0,
\end{align*}
where in the second to last line we used the fact that the trace vanishes on commutators.  Thus $v_{2j+1}(E) = 0$ by \cref{cDEF}.
\end{proof}

Reznikov gives two proofs of the above proposition.  One uses a result of Sampson on harmonic maps of K\"ahler manifolds into symmetric spaces \cite{Sampson} and the other uses the deformation theory of Higgs bundles \cite{Simpson}.

\subsection{A natural primitive form}
For $(E,\nabla)$ a semi-simple flat vector bundle over a compact manifold, our proof of \cref{propCVanish} shows that not only the cohomology class $v_{2j+1}(E) \in H^{j+1,j}(X)$ vanishes, but that its representative $v_{2j+1}(E,h) \in \A^{j+1,j}(X)$ vanishes when $h$ is the harmonic metric.  Therefore if $k$ is any hermitian metric we can use the transgression \cref{transgression} to get a primitive for $v_{2j+1}(E,k)$:
\[
d Tv_{2j}(E,\nabla, h + t(k-h)) = v_{2j+1}(E,k).
\]

\section{Characteristic classes of Higgs bundles in Dolbeault cohomology}
In this section we define secondary characteristic classes for Higgs bundles.  We first recall the definitions and properties of Atiyah classes.
\subsection{Atiyah classes}
Let $(E,\bar\partial_E)$ be a holomorphic vector bundle over a complex manifold $X$ (for now $X$ need not be K\"ahler).  Associated to $E$ is its Atiyah class \cite{Atiyah}
\[
\At E \in H^1(\Om_X^1\otimes \mathcal O(\End E)).
\]    
A Dolbeault representative of the Atiyah class in $H^{1,1}(X;\End E)$, which we also denote by $\At E$, is defined as follows.  Let $\nabla$ be a connection on $E$ whose (0,1) part is $\bar\partial_E$ (for example, $\nabla$ can be taken to be the Chern connection associated to some hermitian metric).  Then $\At E$ is represented by the bidegree (1,1) part of the curvature of $\nabla$ \cite{CSX, ABST}.

The Atiyah-Chern character is defined by
\[
\ch E  = \tr \exp\left(\frac{i}{2\pi} \At E\right) \in \bigoplus_k H^{k,k}(X)
\]
and the $k$th scalar Atiyah class, denoted $\at_k (E)$, is defined to be the bidegree $(k,k)$ part of $\ch E$, i.e.
\[
\at_k(E) = \frac{1}{k!}\left(\frac{i}{2\pi}\right)^k \tr((\At E)^k) \in H^{k,k}(X).
\]
In the case that $X$ is compact and K\"ahler, the Atiyah-Chern character corresponds, via the Hodge decomposition, to the usual Chern character in de Rham cohomology.  We remark that even if the scalar Atiyah classes all vanish, the full Atiyah class, $\At E$, may still be nonzero.

\begin{prop}\label{Atprops}
The Atiyah classes satisfy the following properties.
\begin{enumerate}

\item If $f: X \to Y$ is a holomorphic map and $E\to Y$ a holomorphic vector bundle, then
\[
\At(f^* E) = f^*\At E \in H^{1,1}(X;f^* \End E).
\]

\item If
\[
0 \to E' \to E \to E'' \to 0
\]
is a short exact sequence of holomorphic vector bundles, then
\[
\ch(E) = \ch(E') + \ch(E'').
\]

\item If $E'$ and $E''$ are two holomorphic vector bundles, then
\begin{gather*}
\At(E'\otimes E'') = \At E' \otimes\mathbbm 1_{E''} + \mathbbm 1_{E'}\otimes \At(E'') \\
\ch(E'\otimes E'') = \ch E' \ch E''.
\end{gather*}
\end{enumerate}
\end{prop}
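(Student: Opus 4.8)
The plan is to prove all three properties at the level of the Dolbeault representatives, exploiting that $\At E$ is represented by the bidegree $(1,1)$-part of the curvature of \emph{any} connection $\nabla$ with $\nabla^{0,1} = \bar\partial_E$, and that the resulting class is independent of this choice. For each operation I would therefore construct a connection adapted to the situation and compute its curvature via the standard Chern--Weil formalism, reducing each assertion to a pointwise identity of $\End E$-valued $(1,1)$-forms. Throughout I write $F_\nabla$ for the curvature and $F^{1,1}_\nabla$ for its $(1,1)$-part.

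For property (1), given $\nabla$ on $E \to Y$ with $\nabla^{0,1} = \bar\partial_E$, I would use the pullback connection $f^*\nabla$ on $f^*E$. Since $f$ is holomorphic its pullback preserves the bidegree decomposition of forms, so $(f^*\nabla)^{0,1} = f^*\bar\partial_E = \bar\partial_{f^*E}$, and naturality of curvature gives $F_{f^*\nabla} = f^* F_\nabla$. Taking $(1,1)$-parts, which again commutes with $f^*$, yields $\At(f^*E) = f^*\At E$ on the nose. For property (3), I would equip $E'\otimes E''$ with the tensor product connection $\nabla'\otimes\mathbbm 1 + \mathbbm 1\otimes\nabla''$, whose $(0,1)$-part is $\bar\partial_{E'\otimes E''}$; the standard formula $F_{\nabla'\otimes\nabla''} = F_{\nabla'}\otimes\mathbbm 1 + \mathbbm 1\otimes F_{\nabla''}$ then gives the additive formula for $\At(E'\otimes E'')$ after passing to $(1,1)$-parts. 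Because $F_{\nabla'}\otimes\mathbbm 1$ and $\mathbbm 1\otimes F_{\nabla''}$ commute (the endomorphism factors act on different tensor slots and both form-parts have even degree), the exponential factors as a product, and $\tr(A\otimes B) = \tr A\cdot\tr B$ gives $\ch(E'\otimes E'') = \ch E'\cdot\ch E''$.

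The heart of the argument is property (2). Here I would use that the holomorphic subbundle $E'\subset E$ is $\bar\partial_E$-invariant, so I can choose a connection $\nabla$ on $E$ with $\nabla^{0,1} = \bar\partial_E$ that also preserves $E'$: one builds the $(1,0)$-part freely subject to the Leibniz rule while keeping it block upper triangular with respect to a smooth splitting $E\cong E'\oplus E''$. With respect to such a splitting $\nabla$, and hence $F_\nabla$, is block upper triangular, with diagonal blocks connections on $E'$ and on the quotient $E''$ carrying the correct $(0,1)$-parts. Since products and traces of powers of a block upper triangular matrix see only the diagonal blocks, $\tr\bigl((F^{1,1}_\nabla)^k\bigr) = \tr\bigl((F^{1,1}_{E'})^k\bigr) + \tr\bigl((F^{1,1}_{E''})^k\bigr)$ for every $k$, and summing the exponential series gives $\ch E = \ch E' + \ch E''$.

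I expect the main obstacle to be bookkeeping rather than anything conceptual: in (3) one must track the grading signs inherent in the supercommutator convention when verifying the tensor-product curvature formula and the commutation of the two factors, and in (2) one must check that the diagonal blocks of $F_\nabla$ genuinely represent $\At E'$ and $\At E''$ — that is, that the induced connections carry the correct $(0,1)$-parts — so that the block computation can be read back as an identity of Atiyah classes. Well-definedness of the classes in cohomology is precisely what licenses these convenient choices of connection, so once the representatives are shown to agree the statements follow.
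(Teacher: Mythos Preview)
Your proposal is correct and follows essentially the same route as the paper: Chern--Weil representatives via well-chosen compatible connections for all three parts. The only cosmetic difference is in (2), where the paper assembles the block upper-triangular connection explicitly as $\nabla = \begin{pmatrix}\nabla' & \bar\partial s\\ 0 & \nabla''\end{pmatrix}$ from compatible connections on $E'$, $E''$ and the second fundamental form $\bar\partial s$ of a smooth splitting $s$, whereas you arrive at the same object by first demanding $\nabla^{0,1}=\bar\partial_E$ and then choosing the $(1,0)$-part upper triangular; the ensuing trace computation is identical.
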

\begin{proof}
The first statement follows from the fact that if $\nabla$ is a connection on $E$ compatible with $\bar\partial_E$ then $f^*\nabla$ is compatible with $\bar\partial_{f^*\nabla}$ and the curvatures are related by $(f^*\nabla)^2 = f^* \nabla^2$.  

To prove (2), let $s: E'' \to E$ be a $C^\infty$ splitting of the sequence and define
\[
\bar\partial s := \bar\partial_E \circ s - s\circ \bar\partial_{E''} \in \A^{0,1}(X; \Hom(E'', E')).
\]
Choose connections $\nabla'$ and $\nabla''$ on $E'$ and $E''$ that are compatible with their respective holomorphic structures.  Then, under the decomposition $E \simeq E' \oplus E''$ provided by $s$, the connection
\[
\nabla = \left(\begin{array}{cc}
\nabla' & \bar\partial s \\
0 & \nabla''
\end{array}\right)
\]
is easily checked to be compatible with the holomorphic structure on $E$ and
\[
\tr \exp\nabla^2 = \tr \exp (\nabla')^2 + \tr \exp(\nabla'')^2,
\]
from which the result follows.

Finally, for (3) observe that if $\nabla'$ and $\nabla''$ are compatible connections on $E'$ and $E''$, respectively, then $\nabla' \otimes \mathbbm 1_{E''} + \mathbbm 1_{E'} \otimes \nabla''$ is a compatible connection on $E' \otimes E''$ with curvature $(\nabla')^2 \otimes \mathbbm 1_{E''} + \mathbbm 1_{E'} \otimes (\nabla'')^2$.

\end{proof}

\subsubsection{Transgression forms}\label{transforms}
For a connection $\nabla$ compatible with $\bar\partial_E$, let $\at_k(E,\nabla) = \frac{1}{k!} \left(\frac{i}{2\pi}\right)^k\tr((\nabla^2)_{(1,1)})^k\in \A^{k,k}(X)$ denote the specific representative of $\at_k(E)$ determined by $\nabla$.  Given two compatible connections $\nabla_0, \nabla_1$, we can explicitly define a transgression form $\at_k(E,\nabla_0, \nabla_1) \in \A^{k,k-1}(X)$ satisfying
\[
\bar\partial \at_k(E,\nabla_0,\nabla_1) = \at_k(E,\nabla_1) - \at_k(E,\nabla_0).
\]
These forms are defined by
\begin{align*}
\at_k(E,\nabla_0,\nabla_1) &= \\
&\hspace{-45pt}\left(\frac{1}{k!}\left(\frac{i}{2\pi}\right)^k  \int_0^1 \tr\left(\left(\nabla_0 + t(\nabla_1 - \nabla_0) + dt\otimes\frac{\partial}{\partial t}\right)^2\right)^k\right)_{(k,k-1)},
\end{align*}
where the subscript $(k,k-1)$ means to take the part in bidegree $(k,k-1)$.

\subsection{Characteristic classes of Higgs bundles}
Now let $(E,\theta)$ be a Higgs bundle over the complex manifold $X$.  The Higgs field $\theta$ defines a class in $H^{1,0}(X;\End E)$ and we make the following definition:
\begin{defin}
For $k \ge 0$, let
\[
\operatorname{ah}_k(E,\theta) = \frac{1}{k!} \left(\frac{i}{2\pi}\right)^k \tr \left((\At E)^k \theta\right) \in H^{k+1,k}(X;\C).
\]
\end{defin}

\begin{prop}
The characteristic classes of Higgs bundles satisfy the following properties.
\begin{enumerate}
\item If $f: X \to Y$ is a holomorphic map and $(E,\theta) \to Y$ a Higgs bundle, then 
\[
\ah_k(f^*E,f^*\theta) = f^* \ah_k(E,\theta),
\]
for all $k \ge 0$.

\item If
\[
0 \to (E',\theta') \to (E,\theta) \to (E'',\theta'')\to 0
\]
is a short exact sequence of Higgs bundles, then
\[
\ah_k(E,\theta) = \ah_k(E',\theta') + \ah_k(E'',\theta''),
\]
for all $k \ge 0$.

\item If $(E',\theta')$ and $(E'',\theta'')$ are two Higgs bundles, then
\begin{align*}
\ah_k(E'\otimes E'', &\theta'\otimes \mathbbm 1_{E''} + \mathbbm 1_{E'} \otimes \theta'') \\ 
&= \sum_{j=0}^k \left(\ah_j(E',\theta') \at_{k-j}(E'') + \at_j(E')\ah_{k-j}(E'',\theta'') \right),
\end{align*}
for all $k\ge 0$.

\end{enumerate}
\end{prop}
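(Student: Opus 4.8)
The plan is to prove each property by mirroring the corresponding part of Proposition~\ref{Atprops}, carrying the Higgs field along through the same connection representatives; writing $c=\frac{i}{2\pi}$ and computing $\ah_k$ with the Dolbeault representative of $\At E$ coming from a fixed compatible connection $\nabla$, each identity will in fact hold already at the level of forms. Property (1) is then immediate: if $f\colon X\to Y$ is holomorphic and $\nabla$ is compatible with $\bar\partial_E$ on $E\to Y$, then $f^*\nabla$ is compatible with $\bar\partial_{f^*E}$ and represents $\At(f^*E)=f^*\At E$ by Proposition~\ref{Atprops}(1). Since $f^*$ is an algebra homomorphism on forms and commutes with the fiberwise trace, applying it to $\tr((\At E)^k\theta)$ and using $f^*\theta$ as the Higgs field of $f^*E$ yields $\ah_k(f^*E,f^*\theta)=f^*\ah_k(E,\theta)$.

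For property (2) I would imitate the proof of Proposition~\ref{Atprops}(2). Choose a $C^\infty$ splitting $s\colon E''\to E$ and compatible connections $\nabla',\nabla''$, and form the block connection $\nabla=\begin{pmatrix}\nabla' & \bar\partial s\\ 0 & \nabla''\end{pmatrix}$ exactly as there; its curvature, and hence the induced representative of $\At E$, is block upper triangular with diagonal blocks representing $\At E'$ and $\At E''$. The new point is that, because $E'$ is a sub-Higgs bundle it is preserved by $\theta$, so in the splitting $\theta$ is itself block upper triangular,
\[
\theta = \begin{pmatrix} \theta' & \beta \\ 0 & \theta'' \end{pmatrix}, \qquad \beta\in\A^{1,0}(X;\Hom(E'',E')).
\]
A product of block upper-triangular matrices is again block upper triangular, with diagonal blocks the products of the diagonal blocks, so the off-diagonal entries $\bar\partial s$ and $\beta$ never reach the diagonal and $\tr((\At E)^k\theta)=\tr((\At E')^k\theta')+\tr((\At E'')^k\theta'')$ as forms. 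Multiplying by $c^k/k!$ gives the additivity.

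Property (3) is the substantive computation. By Proposition~\ref{Atprops}(3), the connection $\nabla'\otimes\mathbbm 1+\mathbbm 1\otimes\nabla''$ represents $\At(E'\otimes E'')$ by $A'+A''$ with $A'=\At E'\otimes\mathbbm 1$ and $A''=\mathbbm 1\otimes\At E''$, while the tensor Higgs field is $\Theta=T'+T''$ with $T'=\theta'\otimes\mathbbm 1$ and $T''=\mathbbm 1\otimes\theta''$. Since $\At E'$ and $\At E''$ are even (bidegree $(1,1)$) forms acting on different tensor factors, $A'$ and $A''$ commute, and I would expand
\[
(A'+A'')^k = \sum_{j=0}^k \binom{k}{j}(A')^j(A'')^{k-j} = \sum_{j=0}^k \binom{k}{j}\,(\At E')^j\otimes(\At E'')^{k-j}.
\]
Multiplying by the single factor $\Theta=T'+T''$, the evenness of the Atiyah factors makes every Koszul sign trivial, so the one odd factor $T'$ or $T''$ lands on the appropriate tensor slot, and the fiberwise trace factors as $\tr_{E'\otimes E''}=\tr_{E'}\cdot\tr_{E''}$ (again sign-free, since in each resulting term one slot is even). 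Collecting the two resulting families of terms and using $\tfrac{1}{k!}\binom{k}{j}=\tfrac{1}{j!(k-j)!}$ together with $c^k=c^j c^{k-j}$ regroups the sum precisely as $\sum_{j=0}^k\bigl(\ah_j(E',\theta')\,\at_{k-j}(E'')+\at_j(E')\,\ah_{k-j}(E'',\theta'')\bigr)$.

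The main obstacle is the graded-sign bookkeeping in (3): a priori both the binomial expansion and the trace factorization could carry Koszul signs from permuting form-valued endomorphisms past one another. The observation that removes them is that $\At E'$ and $\At E''$ are even-degree forms, so they commute freely and leave the single odd Higgs factor undisturbed; this is exactly what makes the tensor formula come out with all plus signs and with $\Theta$ contributing linearly, so that each term retains precisely one of $\theta'$ or $\theta''$.
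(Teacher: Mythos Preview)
Your proposal is correct and follows essentially the same approach as the paper: each part is deduced from the corresponding item of \cref{Atprops}, together with the block upper-triangular shape of $\theta$ in part (2) and the binomial expansion of $(\At E'\otimes\mathbbm 1+\mathbbm 1\otimes\At E'')^k$ in part (3). The paper's proof is much terser (it calls (3) ``a straightforward computation''), and your added discussion of the Koszul signs---resolved by the even form-degree of the Atiyah representatives---is a helpful elaboration rather than a departure.
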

\begin{proof}
The first statement follows from \cref{Atprops}(1).  Part (2) follows from the proof of \cref{Atprops}(2) and the fact that, under the splitting $E = E' \oplus E''$, the Higgs field on $E$ takes the form
\[
\theta = \left(\begin{array}{cc}
\theta' & * \\
0 & \theta''
\end{array}\right).
\]

The last statement is a straightforward computation using \cref{Atprops}(3).
\end{proof}

We will now see that, up to a constant factor, these classes are certain transgression forms from \cref{transforms}.  If $\nabla$ is any connection compatible with $\bar\partial_E$ then so is $\nabla + \theta$.  Further, since $[\bar\partial_E, \theta] = 0$, the (1,1) parts of the curvatures of $\nabla$ and $\nabla+\theta$ are the same.  Thus
\[
\bar\partial \at_k(E,\nabla,\nabla + \theta) = \at_k(E,\nabla) - \at_k(E,\nabla + \theta) = 0.
\]

\begin{prop}
For all $k \ge 0$,
\[
\ah_k(E,\theta) = 2\pi i \at_{k+1}(E,\nabla,\nabla+\theta).
\]
\end{prop}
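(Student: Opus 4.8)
The plan is to compute the transgression form directly from its definition in \cref{transforms} and read off the relevant bidegree. Set $\nabla_t = \nabla + t\theta$, so that the object inside $\at_{k+1}(E,\nabla,\nabla+\theta)$ is built from the superconnection $\mathbb{A} = \nabla_t + dt\otimes\frac{\partial}{\partial t}$ on the pullback of $E$ to $X\times[0,1]$. The first step is the curvature computation
\[
\mathbb{A}^2 = F_t + dt\wedge\theta, \qquad F_t := \nabla_t^2,
\]
where the cross term is $dt\wedge\dot\nabla_t = dt\wedge\theta$ because $\frac{\partial}{\partial t}\nabla_t = \theta$; this is a short direct check using $d(dt)=0$.

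Next I would expand $\tr\big((\mathbb{A}^2)^{k+1}\big) = \tr\big((F_t + dt\wedge\theta)^{k+1}\big)$ and keep only the summands linear in $dt$, since all others are killed either by $\int_0^1$ or by $dt\wedge dt = 0$. These are $\sum_{j=0}^{k} F_t^{\,j}(dt\wedge\theta)F_t^{\,k-j}$. Each $F_t^{\,j}$ has even form degree, so $dt$ may be pulled to the front at no cost and the trace may be cyclically permuted without a Koszul sign (the only odd factor being $\theta$); thus every one of the $k+1$ summands equals $dt\wedge\tr(\theta F_t^{\,k})$. Applying $\int_0^1$ then yields $(k+1)\int_0^1\tr(\theta F_t^{\,k})\,dt$.

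It remains to extract the bidegree $(k+1,k)$ part. As $\theta\in\A^{1,0}(X;\End E)$, I need the $(k,k)$ component of $F_t^{\,k}$. Since the $(0,1)$ part of $\nabla_t$ is the integrable $\bar\partial_E$, the curvature has no $(0,2)$ component, so $F_t = F_t^{2,0} + F_t^{1,1}$; a bidegree count then forces the $(k,k)$ part of $F_t^{\,k}$ to be exactly $(F_t^{1,1})^k$. Moreover, as already noted, adding the $(1,0)$-form $t\theta$ leaves the $(1,1)$ part of the curvature unchanged, so $F_t^{1,1}$ is precisely the chosen representative of $\At E$ and is independent of $t$; the $t$-integral is therefore trivial. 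Collecting the surviving factor $k+1$ against $(k+1)!$ and matching the powers of $\frac{i}{2\pi}$ (with the fiber-integration sign convention of \cref{transforms}) gives
\[
\at_{k+1}(E,\nabla,\nabla+\theta) = \frac{1}{2\pi i}\,\ah_k(E,\theta),
\]
which is the asserted identity.

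The step demanding the most care is this last bookkeeping: one must track the graded-trace signs, the orientation/sign in the fiber integration $\int_0^1$, and the bidegree selection simultaneously, and in particular confirm that the reordering under $\tr$ is genuinely sign-free and that the vanishing of $F_t^{0,2}$ pins the $(k,k)$-part down to $(\At E)^k$ alone. Everything else — the curvature identity and the combinatorial count of the $dt$-terms — is routine.
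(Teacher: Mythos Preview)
Your proof is correct and follows essentially the same route as the paper's: compute the square of the superconnection $\nabla + t\theta + dt\otimes\partial_t$, pick out the $dt$-linear piece, and use bidegree considerations to reduce everything to $(\nabla^2)_{(1,1)}^k\theta$. The only cosmetic difference is that the paper invokes $\theta\wedge\theta=0$ at the outset to write the curvature as $\nabla^2 + t[\nabla,\theta] + dt\wedge\theta$ (with $[\nabla,\theta]$ of pure type $(2,0)$), whereas you keep $F_t=\nabla_t^2$ unexpanded and strip off the unwanted $(2,0)$ contributions by bidegree counting at the end; either way the $t$-dependence disappears and the same formula drops out.
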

\begin{proof}
Since $\theta\w\theta = 0$, we have
\[
\left(\nabla + t\theta + dt \otimes \frac{\partial}{\partial t}\right)^2 = \nabla^2 + t[\nabla,\theta] + dt\w\theta.
\]
Since $\theta$ is holomorphic and $\nabla$ is a compatible connection, $[\nabla,\theta]$ has bidegree $(2,0)$. Thus the only pieces that contribute when computing $\at_{k+1}(E,\nabla,\nabla+\theta)$ are $(\nabla^2)_{(1,1)}$ and $dt \w \theta$, giving us
\begin{align*}
2\pi i\at_{k+1}(E,\nabla,\nabla + \theta) &= \frac{-1}{(k+1)!} \left(\frac{i}{2\pi}\right)^k \int_0^1 \tr\left((\nabla^2)_{(1,1)} + dt \w \theta\right)^{k+1} \\
&= \frac{-1}{(k+1)!} \left(\frac{i}{2\pi}\right)^k \hspace{-5pt} \int_0^1 \hspace{-5pt} \tr((k \hspace{-2pt} +\hspace{-2pt}1)(\nabla^2)_{(1,1)}^k \w dt \w \theta) \\
&= \frac{1}{k!}\left(\frac{i}{2\pi}\right)^k \tr\left((\nabla^2)_{(1,1)}^k \theta\right) \\
&= \ah_k(E,\theta).
\end{align*}
\end{proof}

\subsection{The case of polystable Higgs bundles with vanishing Chern classes}
We now show that these classes correspond to the classes $v_{2j+1}$ when the Higgs bundle corresponds to a flat vector bundle under the nonabelian Hodge theorem.  In particular, in this case the classes $\ah_k$ vanish for $k\ge 1$. 

\begin{prop}
Suppose $(E,\theta) \to X$ is a polystable Higgs bundle with vanishing Chern classes over a compact K\"ahler manifold $X$ and let $\nabla$ denote the corresponding flat connection.  Then
\begin{enumerate}
\item  Using the Hodge decomposition to view $\ah_0(E,\theta)$ as a class in $H^1(X;\C)$, we have
\[
\operatorname{Re} \ah_0(E,\theta) = \frac{1}{2} (\ah_0(E,\theta) + \overline{\ah_0(E,\theta)})  = -\frac{1}{2} v_1(E,\nabla).
\]

\item $\ah_k(E,\theta) = 0$, for $k \ge 1$.
\end{enumerate}
\end{prop}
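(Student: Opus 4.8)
The plan is to compute both classes with respect to the Chern connection $\nabla_h$ of the harmonic metric $h$, for which the Dolbeault representative of $\At E$ is simply the $(1,1)$-type curvature $F_{\nabla_h}$. The essential input is the Hitchin equation, which I would extract from the flatness $\nabla^2 = 0$ of $\nabla = \nabla_h + \theta + \theta^*$: decomposing $\nabla^2$ by bidegree, the $(2,0)$ and $(0,2)$ parts give $\partial_h\theta = 0$ and $\bar\partial_E\theta^* = 0$, while the $(1,1)$ part reads $F_{\nabla_h} + \partial_h\theta^* + [\theta,\theta^*] = 0$. Since the Chern connection is compatible with $h$, one has $\partial_h(A^*) = (\bar\partial_E A)^*$ for endomorphism-valued forms, whence $\partial_h\theta^* = (\bar\partial_E\theta)^* = 0$ by holomorphicity of $\theta$. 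This yields the key identity $\At E = F_{\nabla_h} = -[\theta,\theta^*]$.

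For part (1), observe that $\ah_0(E,\theta) = \tr\theta$ requires no Hitchin input. Since $\om(E,h) = -(\theta+\theta^*)$ as computed in the proof of \cref{propCVanish}, we have $v_1(E,\nabla) = \tr\om = -(\tr\theta + \tr\theta^*)$, and because $\theta^*$ is the $h$-adjoint (which conjugates the form part) one has $\tr\theta^* = \overline{\tr\theta} = \overline{\ah_0(E,\theta)}$. Hence $\frac{1}{2}(\ah_0 + \overline{\ah_0}) = \frac{1}{2}(\tr\theta + \tr\theta^*) = -\frac{1}{2}v_1$, as claimed. The holomorphic $1$-form $\tr\theta$ is automatically harmonic on the compact K\"ahler manifold $X$, so it genuinely represents the $(1,0)$-component of the class under the Hodge decomposition.

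For part (2), I would substitute $\At E = -[\theta,\theta^*]$ to obtain the representative $\ah_k = \frac{(-1)^k}{k!}(\frac{i}{2\pi})^k\tr([\theta,\theta^*]^k\theta)$. Using $\theta\w\theta = 0 = \theta^*\w\theta^*$, every mixed term in the expansion of $[\theta,\theta^*]^k = (\theta\theta^* + \theta^*\theta)^k$ produces an adjacent $\theta\theta$ or $\theta^*\theta^*$ and so vanishes, leaving $[\theta,\theta^*]^k = (\theta\theta^*)^k + (\theta^*\theta)^k$. The word $(\theta^*\theta)^k\theta$ ends in $\theta\w\theta = 0$, while the graded cyclicity of the trace moves the trailing $\theta$ of $(\theta\theta^*)^k\theta$ to the front, again producing $\theta\w\theta = 0$; thus $\tr([\theta,\theta^*]^k\theta) = 0$ for all $k \ge 1$. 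The representative form therefore vanishes identically, and hence so does the class $\ah_k(E,\theta)$.

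I expect the main obstacle to be the clean derivation of $\At E = -[\theta,\theta^*]$. This is precisely the step where the harmonicity of $h$ (as opposed to an arbitrary hermitian metric) enters, via the nonabelian Hodge theorem, since it is flatness of $\nabla$ that forces the Hitchin equation; and it requires the justification $\partial_h\theta^* = 0$ from the adjoint-compatibility of the Chern connection. Once this identity is in hand, both parts reduce to the elementary algebra of words in $\theta$ and $\theta^*$ subject to $\theta^2 = (\theta^*)^2 = 0$.
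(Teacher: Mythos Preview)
Your proposal is correct and follows essentially the same route as the paper: compute everything with the harmonic metric, read off $\At E = -[\theta,\theta^*]$ from the $(1,1)$ part of $\nabla^2=0$, and then reduce both statements to the word algebra in $\theta,\theta^*$ with $\theta^2=(\theta^*)^2=0$. If anything, your derivation is slightly more careful than the paper's, since you explicitly justify $\partial_h\theta^*=(\bar\partial_E\theta)^*=0$, a step the paper leaves implicit when asserting that the $(1,1)$ part of $[\nabla_h,\theta+\theta^*]$ vanishes.
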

\begin{proof}
As in the proof of \cref{propCVanish}, if $h$ is the Hermitian-Yang-Mills metric then
\begin{gather}
\nabla = \nabla_h + \theta + \theta^* \label{flatconn}\\
\nabla^* = \nabla_h - \theta - \theta^*\nonumber
\end{gather}
so that
\begin{gather*}
v_1(E,\nabla,h) = \frac{1}{2}\tr(\nabla^* - \nabla) = -\tr(\theta + \theta^*) \\
= -\tr \theta - \overline{\tr\theta} = -\ah_0(E,\theta,h) - \overline{\ah_0(E,\theta,h)}.
\end{gather*}
Since $\ah_0(E,\theta,h)$ is a $\bar\partial$-closed $(1,0)$ form it is actually harmonic (since we trivially have $\bar\partial^* \ah_0(E,\theta,h)$).  Thus taking cohomology and using the Hodge decomposition gives (1).

The proof of  (2) is similar to that of \cref{propCVanish}.  From \cref{flatconn}, we have
\[
0 = (\nabla)^2 = F_h + [\nabla_h, \theta + \theta^*] + [\theta, \theta^*].
\]
Since $[\bar\partial_E,\theta] = 0$, the $(1,1)$ part of the above equation gives us
\[
F_h = -[\theta,\theta^*].
\]
Thus $\At(E)$ is represented by the differential form $-[\theta,\theta^*]$.
Then for $k \ge 1$,
\[
\ah_k(E,\theta,h) = (-1)^k\tr([\theta,\theta^*]^k \theta) = (-1)^k \tr((\theta\theta^* \cdots \theta \theta^*) \theta) = 0,
\]
since the trace vanishes on commutators and $\theta\w\theta = 0$.
\end{proof}
 If we drop either of the conditions (vanishing Chern classes or polystability), then the classes $\ah_k$ do not vanish in general.  For example, if $E\to X$ is a non-trivial line bundle and $\theta$ represents a non-zero class in $H^{1,0}(X)$, the pair $(E,\theta)$ is a stable Higgs bundle with $\ah_1(E,\theta) = \at_1(E) \theta$ non-zero in general.  For an example of a non-polystable Higgs bundle with vanishing Chern classes and non-vanishing $\ah_1$, we take \footnote{this example is drawn in part from \url{http://mathoverflow.net/q/161894/4622}}
 \[
 E = p^* \O(-1) \oplus p^* \O(1) \to \C P^1 \times \C/\Z^2,
 \]
 where $p: \C P^1 \times \C/\Z^2 \to\C P^1$ is the projection, and
 \[
 \theta = \left(\begin{array}{cc}
 \mathbbm 1_{p^*\O(-1)} & 0 \\
 0 & - \mathbbm 1_{p^*\O(1)}
 \end{array}\right) dz,
 \]
 where $dz$ represents a generator of $H^{1,0}(\C/\Z^2)$.  The Chern class of $\O(\pm 1)$ is represented by $\pm \om$, where $\om$ denotes the Fubini-Study form on $\C P^1$.  Then $\ch E = \rk E = 2$ (indeed, $E$ is actually topologically trivial) and the Atiyah class of $E$ is represented by
 \[
 \At(E) = \left(\begin{array}{cc}
 \om & 0 \\
 0 & -\om
 \end{array}\right)
 \]
 so that $\ah_1(E,\theta) = 2\om \w dz \ne 0$.  Of course $(E,\theta)$ is not polystable (it is the direct sum of stable Higgs bundles of different slopes).

\bibliographystyle{alpha}
\bibliography{biblio}

\begin{thebibliography}{ABST13}

\bibitem[ABST13]{ABST}
M.~Abate, F.~Bracci, T.~Suwa, and F.~Tovena.
\newblock {Localization of Atiyah classes}.
\newblock {\em Rev. Mat. Iberoamericana}, 29(2):547--578, 2013.

\bibitem[Ati57]{Atiyah}
M.~F. Atiyah.
\newblock Complex analytic connections in fibre bundles.
\newblock {\em Trans. Amer. Math. Soc.}, 85:181--207, 1957.

\bibitem[Bis95]{Bis}
I.~Biswas.
\newblock Secondary invariants of higgs bundles.
\newblock {\em International Journal of Mathematics}, 6(02):193--204, 1995.

\bibitem[BL95]{BL}
J.M. Bismut and J.~Lott.
\newblock {Flat vector bundles, direct images and higher real analytic
  torsion}.
\newblock {\em J. Amer. Math. Soc.}, 8(2):291--363, 1995.

\bibitem[Blo78]{Bloch}
S.~Bloch.
\newblock {Applications of the dilogarithm function in algebraic
  $\mathrm{K}$-theory and algebraic geometry}.
\newblock In {\em Proceedings of the International Symposium on Algebraic
  Geometry (Kyoto Univ., Kyoto, 1977)}, pages 103--114, Tokyo, 1978. Kinokuniya
  Book Store.

\bibitem[CS85]{Cheeger}
J.~Cheeger and J.~Simons.
\newblock {Differential characters and geometric invariants}.
\newblock In {\em Geometry and topology (College Park, Md., 1983/84)}, pages
  50--80. Springer, Berlin, 1985.

\bibitem[CSX12]{CSX}
Zhuo Chen, Mathieu Sti{\'e}non, and Ping Xu.
\newblock From atiyah classes to homotopy leibniz algebras.
\newblock {\em arXiv preprint arXiv:1204.1075}, 2012.

\bibitem[DHZ00]{Dupont}
J.~Dupont, R.~Hain, and S.~Zucker.
\newblock {Regulators and characteristic classes of flat bundles}.
\newblock {\em The arithmetic and geometry of algebraic cycles (Banff, AB,
  1998)}, 24:47--92, 2000.

\bibitem[KT75]{KT}
F.W. Kamber and P.~Tondeur.
\newblock {\em {Foliated bundles and characteristic classes}}, volume 493 of
  {\em Lecture Notes in Mathematics}.
\newblock Springer Berlin Heidelberg, March 1975.

\bibitem[Rez95]{Rez}
A.~Reznikov.
\newblock {All regulators of flat bundles are torsion}.
\newblock {\em Ann. of Math. (2)}, 141(2):373--386, 1995.

\bibitem[Sam86]{Sampson}
J.H. Sampson.
\newblock {\em {Applications of harmonic maps to K{\"a}hler geometry}},
  volume~49 of {\em Contemporary Mathematics}.
\newblock American Mathematical Society, Providence, Rhode Island, 1986.

\bibitem[Sim92]{Simpson}
C.T. Simpson.
\newblock {Higgs bundles and local systems}.
\newblock {\em Inst. Hautes \'Etudes Sci. Publ. Math.}, 75(1):5--95, 1992.

\bibitem[Sou89]{Soule}
C.~Soul{\'e}.
\newblock {Connexions et classes caract{\'e}ristiques de Beilinson}.
\newblock In {\em Algebraic $\mathrm{K}$-theory and algebraic number theory
  (Honolulu, HI, 1987)}, pages 349--376. Amer. Math. Soc., Providence, RI,
  1989.

\end{thebibliography}

\end{document}